\newtheorem{thm}{Theorem}[section]
\newtheorem{lem}[thm]{Lemma}
\newtheorem{prop}[thm]{Proposition}
\begin{document}

\title{Homotopies of constant Cuntz class}

\author[AT]{Andrew S. Toms}

\address{Department of Mathematics, Purdue University, 150 N University St, West Lafayette IN 47907, USA \ {\tt atoms@purdue.edu}}

\maketitle

\begin{abstract}

Let $A$ be a unital simple separable exact \mbox{C$^*$-algebra} which is approximately divisible and of real rank zero.  We prove that the set of positive elements in $A$ with a fixed Cuntz class is path connected.  This result applies in particular to irrational rotation algebras and AF algebras.  \end{abstract}

\section{Introduction}\label{intro}

The Cuntz semigroup of a C$^*$-algebra was introduced by J. Cuntz in 1978 with a view to examining rank functions on C$^*$-algebras \cite{Cuntz1}.  It has enjoyed renewed relevance in recent years owing both to its role in the classification theory of simple separable amenable C$^*$-algebras and to its capacity for reflecting C$^*$-algebraic properties in a purely algebraic setting.  A salient example is the recent progress on the Toms-Winter conjecture and its confirmation in both the unique trace and stable rank one cases.  This has been a concerted effort by many authors and an exhaustive list of credits would be prohibitively long, but some important works include \cite{APRT}, \cite{CETWW}, \cite{ET}, and \cite{W1}.  It has firmly established the breadth and importance of C$^*$-algebras which absorb the Jiang-Su algebra $\mathcal{Z}$ tensorially, a class that includes the approximately divisible C$^*$-algebras considered here (\cite{BKR}, \cite{TW2}).

A steady theme in applications of the Cuntz semigroup is that what is true of projections and Murray-von Neumann equivalence is often true also of positive elements and Cuntz equivalence, albeit with greater effort.  We continue this theme with an investigation of the homotopy properties of positive elements in a class of C$^*$-algebras which includes, notably, the irrational rotation algebras.
When we say that two projections are homotopic, it is implicit that the homotopy consists of projections whose Murray-von Neumann equivalence classes are necessarily constant.  Na\"ively, homotopies of positive elements are less interesting:  any two positive elements of a C$^*$-algebra are homotopic via the line segment between them, and this segment consists of positive elements. 
A better question, however, one that extends the spirit of homotopy for projections, is to ask whether two positive elements are homotopic via a path of positive elements whose Cuntz classes are constant.  It is this question that we address here.  
The chief difficulty is that unlike Murray-von Neumann classes, Cuntz classes are not robust under small perturbations in norm.  Indeed, every positive element in a unital C$^*$-algebra is arbitrarily close in norm to a representative of the Cuntz class of the unit.  Constructing homotopies of positive elements with constant Cuntz class is therefore a delicate matter.

\begin{thm}\label{cuntzhom}
Let $A$ be a unital simple separable exact C$^*$-algebra which is approximately divisible and of real rank zero.  For each $a \in A_+$, let $\langle a \rangle$ denote its Cuntz class. It follows that the set 
\[
S_a = \{ b \in A_+ \ | \ \langle b \rangle=\langle a \rangle \}
\]
is path connected.
\end{thm}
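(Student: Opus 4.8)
The plan is to combine two class-preserving operations---functional calculus by functions that vanish only at $0$, and conjugation by a path of unitaries in the connected component of $1$---with a convexity observation that absorbs all movement \emph{inside} a fixed hereditary subalgebra, reducing the theorem to a single crossing between two hereditary subalgebras that represent $\langle a\rangle$. First I would record the soft reductions. Since $A$ is exact and approximately divisible it is $\mathcal{Z}$-stable, hence has strict comparison of positive elements, and (exactness forcing quasitraces to be traces) Cuntz comparison is governed by the rank functions $d_\tau(\cdot)$, $\tau\in T(A)$; in particular $\langle a\rangle=\langle b\rangle$ is detected by $d_\tau(a)=d_\tau(b)$ for all $\tau$, and $\langle g(b)\rangle=\langle b\rangle$ whenever $g\colon[0,\|b\|]\to[0,\infty)$ is continuous with $g(0)=0$ and $g>0$ on $(0,\|b\|]$. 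A homotopy of such functions is a path in $S_a$, and the rescaling path $s\mapsto((1-s)+s/\|b\|)b$ lets me assume $\|a\|=\|b\|=1$.

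Next I would exploit convexity. If $x,y$ are both strictly positive in a hereditary subalgebra $B\subseteq A$, then for $t\in[0,1]$ the element $tx+(1-t)y\ge tx$ is again strictly positive in $B$, so it has the same support projection as $x$ and hence $d_\tau(tx+(1-t)y)=d_\tau(x)$ for all $\tau$; the segment therefore lies in $S_x$. Every element of $S_a$ is strictly positive in its own hereditary subalgebra, and all strictly positive elements of a given $B$ are Cuntz equivalent. Thus the theorem reduces to the following: if $B=\overline{aAa}$ and $C=\overline{bAb}$ satisfy $\langle a\rangle=\langle b\rangle$, it suffices to connect \emph{some} strictly positive element of $B$ to \emph{some} strictly positive element of $C$ through $S_a$, since the convex segments then link these to $a$ and to $b$ respectively.

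For the crossing I would use an amplification-and-descent trick. Given $\epsilon>0$, R{\o}rdam's lemma provides $\delta>0$ and $s\in A$ with $ss^*=(a-\epsilon)_+$ and $c:=s^*s\in \overline{(b-\delta)_+A(b-\delta)_+}\subseteq C$, so that $\langle c\rangle=\langle(a-\epsilon)_+\rangle$. In $M_2(A)$ the positive elements $\mathrm{diag}(ss^*,0)$ and $\mathrm{diag}(0,s^*s)$ are joined by the standard rotation homotopy through elements of constant class; I would then use approximate divisibility to embed a copy of $M_2$ almost centrally and descend this rotation, up to a controlled perturbation, to a path \emph{inside} $A$ connecting $(a-\epsilon)_+$ to $c$ through elements of class $\langle(a-\epsilon)_+\rangle$.

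The main obstacle, and the heart of the argument, is to upgrade this $\epsilon$-truncated crossing to one of constant class $\langle a\rangle$: passing to $(a-\epsilon)_+$ lowers the class, and Cuntz class is not stable under the norm perturbations that a naive concatenation over $\epsilon\to0$ would introduce. My plan is to pre-allocate, via approximate divisibility, a sequence of mutually orthogonal ``reservoir'' subalgebras and to carry the class defect between $(a-\epsilon)_+$ and $a$ on a small orthogonal summand supported there, rotating it into position so that every element of the assembled path has rank function exactly $d_\tau(a)$; separability then lets me organize the truncations into a single convergent telescoping path rather than a concatenation, so that in the limit the endpoints become strictly positive in $B$ and in $C$ at class $\langle a\rangle$. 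Combined with the convexity reduction, this connects $a$ to $b$ within $S_a$ and shows $S_a$ is path connected.
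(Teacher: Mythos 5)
Your opening reductions are sound and partially parallel the paper: the observation that a segment between two strictly positive elements of the same hereditary subalgebra stays in $S_a$ (because $tx \leq tx+(1-t)y$ gives $x \precsim tx+(1-t)y$ while membership in $\overline{xAx}$ gives the reverse) is exactly how the paper closes its Proposition 3.2. But the crossing between $\overline{aAa}$ and $\overline{bAb}$ --- which you correctly identify as the heart of the matter --- is not actually carried out, and the two mechanisms you propose for it have genuine gaps. First, descending the $\mathrm{M}_2(A)$ rotation joining $\mathrm{diag}(ss^*,0)$ to $\mathrm{diag}(0,s^*s)$ into $A$ via an approximately central embedding only produces the desired path \emph{up to a norm perturbation}, and the entire difficulty of this problem (as the introduction of the paper stresses) is that Cuntz classes are not stable under norm perturbation: every positive element is arbitrarily close to one of class $\langle 1_A\rangle$. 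So ``up to a controlled perturbation'' does not yield a path in $S_{\langle (a-\epsilon)_+\rangle}$, let alone in $S_a$. Second, the ``reservoir'' repair of the class defect between $\langle(a-\epsilon)_+\rangle$ and $\langle a\rangle$ is asserted rather than proved: $\mathrm{Cu}(A)$ is not cancellative, so the defect is not a well-defined class you can park on an orthogonal summand; even granting a complement, you would need it orthogonal to \emph{every} element of the moving path; and matching $d_\tau$ for all $\tau$ only recovers the Cuntz class (via strict comparison) when neither element is compact. Finally, there is no mechanism forcing your telescoping limit to converge in norm at its endpoint, which is needed both for continuity and for the endpoint to retain class $\langle a\rangle$.

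Two structural omissions confirm that something essential is missing. You never use real rank zero, yet the paper only conjectures the result under the weaker hypothesis of $\mathcal{Z}$-stability; real rank zero enters the actual proof decisively, by writing $\langle a\rangle$ as a supremum of projection classes and homotoping $a$ (Proposition 3.2) to $\bigoplus_i 2^{-i}p_i$ for mutually orthogonal projections $p_i$, after which the crossing is an infinite concatenation of unitary conjugations of projections whose tails have norm $2^{-k}$ --- that geometric decay is precisely the convergence mechanism your sketch lacks. And you never address the compact case $\langle a\rangle=\langle p\rangle$, where equality of rank functions does not determine the class and one genuinely needs to homotope projections; this is where the paper invokes R{\o}rdam's strong $\mathrm{K}_1$-surjectivity of approximately divisible algebras to replace a conjugating unitary by one in the connected component of $1_A$. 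Some $K$-theoretic input of this kind must appear somewhere in any correct proof (it is also hidden in your $\mathrm{M}_2$-rotation step, resurfacing the moment you try to descend from $\mathrm{M}_2(A)$ to $A$), and your sketch never engages with it.
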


\noindent
$\mathrm{K}$-theory is rooted in the classification of vector bundles over a topological space $X$ up to homotopy. Assuming that $X$ is compact and Hausdorff, these can be thought of as homotopy classes of projections in $\mathrm{M}_n(\mathrm{C}(X))$ for large enough $n$.  Positive elements in this algebra with the same Cuntz class must at least have the same rank at every point of $X$, and so homotopies as in Theorem \ref{cuntzhom} would have to preserve rank pointwise.  Such homotopies are obstructed by the $\mathrm{K}$-theory of closed subsets of $X$.  Indeed, this is the fundamental observation of \cite{To1}.  Our result says that for positive elements as in Theorem \ref{cuntzhom} which are not homotopic to projections, this obstruction vanishes---these elements behave like trivial vector bundles, with their homotopy class determined only by their pointwise rank with respect to lower semicontinuous dimension functions.
We expect that the conditions of real rank zero and approximate divisibility can be relaxed to the property of $\mathcal{Z}$-stability.

The paper is organized as follows:  Section \ref{prelim} recalls briefly some facts and tools involving the Cuntz semigroup and the notion of approximate divisibility;  Section \ref{disc} reduces the proof of Theorem \ref{cuntzhom} to the special case of certain positive elements with completely disconnected spectrum;  and Section \ref{main} appeals to a result of R\o rdam on $\mathrm{K}_1$-surjectivity to prove Theorem \ref{cuntzhom} for the restricted class of elements found in Section \ref{disc}.

\vspace{2mm}
\noindent
{\bf Acknowledgement.}. The author would like to thank N. C. Phillips for several helpful conversations and A. Asadi-Vasfi for a careful reading of the manuscript.


\section{Preliminaries}\label{prelim}

A full treatment of the basic theory of the Cuntz semigroup can be found in \cite{APT}.  We make a brief recollection here  of the details required for the sequel.  Let $A$ be a $C^*$-algebra and let $A_+$ denote the subset of positive elements.  If $a,b \in A_+$, we write $a \precsim b$ if there is a sequence $(v_n)$ in $A$ such that $v_n b v_n^* \to a$.  If $a \precsim b$ and $b \precsim a$ then we write $a \sim b$ and say that $a$ and $b$ are Cuntz equivalent.  Set $\mathrm{Cu}(A) =( A \otimes \mathcal{K}_+) / \sim$, and let $\langle a \rangle$ denote the equivalence class of $a$ in $\mathrm{Cu}(A)$.  Now $\langle a \rangle+\langle b \rangle=\langle a \oplus b \rangle$ defines addition on $\mathrm{Cu}(A)$ and $\langle a \rangle \leq \langle b \rangle \Leftrightarrow a \precsim b$ defines a partial order.
These make $\mathrm{Cu}(A)$ into an ordered semigroup, the Cuntz semigroup.

It was shown in \cite{CEI} that $\mathrm{Cu}(A)$ admits suprema for increasing sequences.  We say that $\langle a \rangle$ is compactly contained in $ \langle b \rangle$, written $\langle a \rangle \ll \langle b \rangle$, if whenever $(\langle b_i \rangle)$ is increasing and $\sup \langle b_i \rangle = \langle b \rangle$ we have $\langle a \rangle \leq \langle b_{i_0}\rangle$ for some $i_0 \in \mathbb{N}$.  An element $\langle a \rangle$ of $\mathrm{Cu}(A)$ is called compact if $\langle a \rangle \ll \langle a \rangle$.  If $A$ is simple or of stable rank one, then $\langle a \langle$ is compact if and only if $a \sim p$ for some projection $p$ (\cite{APT}, \cite{BPT}).  Recall that $A$ has real rank zero if every self-adjoint element in $A$ can be approximated in norm by self-adjoint elements with finite spectrum.  Consequently, for such algebras, each $\langle a \rangle \in \mathrm{Cu}(A)$ is seen to be the supremum of a sequence of compact elements by exploiting (i) below.

Let $\delta>0$, and define a continuous map $g_\epsilon: \mathbb{R} \to [0,\infty)$ by 
\[
g_\epsilon(t) = \max \{ 0,t-\epsilon \}.
\]
Set $(a-\epsilon)_+ = g_\epsilon(a)$.  We record three useful facts related to this construction (see \cite{APT}):
\begin{enumerate}
\item[(i)] $a \precsim b$ if and only if $(a-\epsilon)_+ \precsim b$, for every $\epsilon >0$;
\item[(ii)] $\langle (a-\epsilon)_+ \rangle \ll \langle a \rangle$ for every $\epsilon >0$;
\item[(iiii)] If $\|a-b\| < \epsilon$, then $(a-\epsilon)_+ \precsim b$ for every $\epsilon >0$.
\end{enumerate}
The next Lemma is Proposition 2.7 (i) of \cite{KR}.
\begin{lem}\label{herrep}
Let $A$ be a C$^*$-algebra with $a,b \in A_+$.  If $b \in \overline{aAa}$, then $b \precsim a$.
\end{lem}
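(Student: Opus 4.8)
The plan is to reduce the statement, via the recorded fact (i), to proving the single family of subequivalences $(b-\epsilon)_+ \precsim a$ for all $\epsilon>0$, and then, for each fixed $\epsilon$, to manufacture a genuinely positive element of the form $u^* a^2 u$ that simultaneously approximates $b$ to within $\epsilon$ in norm and is dominated by $a$ in the Cuntz order. The key step is the construction of this approximant. Now $\overline{aAa}$ is a hereditary C$^*$-subalgebra of $A$, hence a C$^*$-algebra, so it contains the square root $b^{1/2}$ of its positive element $b$; since $aAa \subseteq aA$ we get $b^{1/2} \in \overline{aAa} \subseteq \overline{aA}$. As $\overline{aA}$ is by definition the norm-closure of $\{ax : x \in A\}$, I may write $b^{1/2} = \lim_n a x_n$ with $x_n \in A$, and continuity of multiplication on bounded sets gives $x_n^* a^2 x_n = (a x_n)^*(a x_n) \to (b^{1/2})^* b^{1/2} = b$. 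Fixing $n$ large and setting $u = x_n$ yields a positive element $u^* a^2 u$ with $\|b - u^* a^2 u\| < \epsilon$.

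It then remains to check the two routine inputs. First, $u^* a^2 u \precsim a$: writing $u^* a^2 u = (au)^*(au)$ and invoking the standard Cuntz equivalence $\langle y^* y \rangle = \langle y y^* \rangle$ gives $u^* a^2 u \sim a(uu^*)a$; since $uu^* \leq \|u\|^2 \, 1$ and conjugation by $a$ preserves the order, $a(uu^*)a \leq \|u\|^2 a^2$, so $a(uu^*)a \precsim a^2 \sim a$, and the chain closes. Second, applying fact (iii) to the estimate $\|b - u^* a^2 u\| < \epsilon$ yields $(b-\epsilon)_+ \precsim u^* a^2 u$. Transitivity of $\precsim$ then gives $(b-\epsilon)_+ \precsim a$, and since $\epsilon>0$ was arbitrary, fact (i) delivers $b \precsim a$.

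The only real obstacle is positivity bookkeeping: a typical element $axa$ of $\overline{aAa}$ need not be positive, so it cannot be fed directly into fact (iii) or compared in the Cuntz order. Passing to $b^{1/2}$, approximating it inside the right ideal $\overline{aA}$, and squaring is exactly the device that repairs this---at the cost of replacing $a$ by $a^2$, which is harmless because $a^2 \sim a$. Every remaining manipulation is standard Cuntz-semigroup arithmetic.
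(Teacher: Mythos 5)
Your proof is correct. Note, however, that the paper gives no argument for this lemma at all---it simply cites Proposition 2.7(i) of \cite{KR}---so there is no in-paper proof to compare against; your argument is essentially the standard one from the literature. Two remarks on economy. First, your key construction already finishes the proof without any $\epsilon$-cutting: once you have $x_n^* a^2 x_n \to b$, set $v_n = x_n^* a^{1/2}$ and observe that $v_n a v_n^* = x_n^* a^2 x_n \to b$, which is \emph{verbatim} the definition of $b \precsim a$ used in this paper. Second, even within your route, the verification that $u^* a^2 u \precsim a$ need not pass through $y^*y \sim yy^*$ and the order-implies-subequivalence fact (the latter of which, via $x \le y \Rightarrow x \in \overline{yAy}$, is of essentially the same depth as the lemma you are proving, so leaning on it is slightly uneconomical even if legitimate as a standard citation): simply write $u^* a^2 u = (u^* a^{1/2})\, a\, (u^* a^{1/2})^*$, which exhibits the subequivalence directly from the definition. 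All the individual steps you do take---heredity of $\overline{aAa}$, $b^{1/2} \in \overline{aA}$, boundedness of the convergent sequence $(ax_n)$ justifying the limit $x_n^* a^2 x_n \to b$, facts (i) and (iii), and $a^2 \sim a$---are correctly applied, so the argument stands as written; it is just longer than it needs to be.
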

\noindent
We record one more technical Lemma for use in the sequel.  
\begin{lem}\label{projrep}
Let $A$ be a C$^*$-algebra and let $0 \neq a \in A_+$.  Suppose that zero is an isolated point of the spectrum of $a$.  It follows that there are a homotopy $h:[0,1] \to \mathrm{C}^*(a)$ and a projection $p \in \mathrm{C}^*(a)$ such that $h(0) = a$, 
$h(1)=p$, and $\langle h(t) \rangle= \langle a \rangle$ for every $t \in [0,1]$.
\end{lem}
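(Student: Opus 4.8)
The plan is to carry out the entire construction inside the commutative subalgebra $\mathrm{C}^*(a)$ by means of the continuous functional calculus, using the spectral gap that the hypothesis provides. Since $0$ is isolated in $\mathrm{sp}(a)$, there is $\delta>0$ with $\mathrm{sp}(a)\subseteq\{0\}\cup[\delta,\|a\|]$; set $K=\mathrm{sp}(a)\setminus\{0\}$, a compact set on which the coordinate function $\iota(\lambda)=\lambda$ is bounded below by $\delta$. The isolation of $0$ means that any function continuous on $K$ and assigned an arbitrary value at $0$ is continuous on all of $\mathrm{sp}(a)$; in particular the indicator $f$ of $[\delta,\infty)$ yields a projection $p:=f(a)\in\mathrm{C}^*(a)$, nonzero because $a\neq 0$.

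First I would define the affine path $h(t)=(1-t)a+tp$ for $t\in[0,1]$. This is a norm-continuous path in $\mathrm{C}^*(a)_+$ with $h(0)=a$ and $h(1)=p$. By functional calculus $h(t)=g_t(a)$, where $g_t$ vanishes at $0$ and equals $\lambda\mapsto(1-t)\lambda+t$ on $K$. The key consequence of the hypothesis is that $g_t(\lambda)\geq\min\{\delta,1\}>0$ for every $\lambda\in K$ and every $t$, so each $g_t$ has the same zero set as $\iota$, namely $\{0\}$.

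It remains to show $\langle h(t)\rangle=\langle a\rangle$ for all $t$, and here the spectral gap does the work. On $K$ the quotient functions $m_t(\lambda)=g_t(\lambda)/\lambda$ and $\ell_t(\lambda)=\lambda/g_t(\lambda)$ are continuous and bounded, since both denominators are bounded away from $0$; extended by $0$ at the isolated point, they define elements of $\mathrm{C}^*(a)$. Then $m_t(a)^{1/2}\,a\,m_t(a)^{1/2}=m_t(a)a=g_t(a)=h(t)$ and $\ell_t(a)^{1/2}\,h(t)\,\ell_t(a)^{1/2}=\ell_t(a)h(t)=a$, the products being computed in the commutative algebra $\mathrm{C}^*(a)$. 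These identities exhibit $h(t)\precsim a$ and $a\precsim h(t)$ directly, with no approximation needed, whence $\langle h(t)\rangle=\langle a\rangle$. One could alternatively phrase the same point through Lemma \ref{herrep} and fact (i), or appeal to the standard fact that positive elements of a commutative C$^*$-algebra are Cuntz equivalent precisely when their open supports agree.

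The main obstacle, and really the only place the hypothesis enters, is the verification that the Cuntz class is constant along the path. This is exactly where the isolation of $0$ is indispensable: it is the gap between $0$ and $K$ that keeps the reciprocal function $\ell_t$ bounded, so that $a\precsim h(t)$ holds on the nose. Absent such a gap, a positive element is in general only Cuntz equivalent to its support projection from one side, and the straight-line homotopy to $p$ would leave the Cuntz class. Everything else, namely continuity of $h$, positivity, and membership in $\mathrm{C}^*(a)$, is routine.
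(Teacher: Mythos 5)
Your proof is correct and is essentially the paper's argument: both work entirely inside $\mathrm{C}^*(a)$ by functional calculus, use the spectral gap to produce the spectral projection $p=\chi_{[\delta,\infty)}(a)$, and observe that the Cuntz class is constant because every element of the path has the same open support as $a$. The only cosmetic differences are that you take the line segment $(1-t)a+tp$ where the paper deforms $a$ through its powers down to $p$, and that you verify the Cuntz equivalences by exhibiting the explicit bounded quotient functions $m_t$ and $\ell_t$ rather than citing the support criterion from \cite{APT}; your explicit version is, if anything, slightly more self-contained.
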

\begin{proof}
Let $\epsilon >0$ be the smallest nonzero element of the spectrum $\sigma(a)$.  Define $h(t) = a^{\frac{1}{1-t}}$ for $t \in [0,1)$ and $p=h(1) = \chi_{[\epsilon,\infty)}(a)$.  This is clearly a homotopy inside $\mathrm{C}^*(a)$, and $\langle h(t) \rangle= \langle a \rangle$ by Proposition 2.5 of \cite{APT}.
\end{proof}

Let $A$ be unital and exact, and let $\mathrm{T}(A)$ denote the space of tracial states on $A$.  For $\tau \in \mathrm{T}(A)$ and $a \in (A \otimes \mathcal{K})_+$ we define 
\[
d_\tau(a) = \lim_{n \to \infty} \tau(a^{1/n}).
\]
This defines a lower semicontinuous dimension function on $A$.  We say that $A$ has strict comparison of positive elements if $a \precsim b$ whenever
\[
d_\tau(a) < d_\tau(b), \ \forall \tau \in \mathrm{T}(A).
\]

A unital separable C$^*$-algebra $A$ is said to be approximately divisible if there exists, for each $N \in \mathbb{N}$, an approximately central sequence of unital $*$-homomorphisms $\phi_n: \mathrm{M}_N(\mathbb{C}) \oplus \mathrm{M}_{N+1}(\mathbb{C}) \to A$ (\cite{BKR}).  It is shown in \cite{BKR} that the irrational rotation algebras are approximately divisible.  Approximately divisible C$^*$-algebras are $\mathcal{Z}$-stable, and simple $\mathcal{Z}$-stable C$^*$-algebras have strict comparison of positive elements (\cite{Ro1}, \cite{TW2}). 


\section{Totally disconnected spectra}\label{disc}

In this section we reduce the proof of our main result to the case of positive elements constructed from orthogonal projections.
While the statement here is different, the essential idea for the following Lemma is contained in Lemma 2.19 of \cite{APT}.

\begin{lem}\label{split}
Let $A$ be a unital simple separable exact C$^*$-algebra with strict comparison of positive elements.  Suppose further that $A$ is stably finite.  Let $a \in A$ be positive, and let $q \in A \otimes \mathcal{K}$ be a projection such that $q \precsim a$.  It follows that there are a projection $p \sim q$ and a positive element $b$ in $\overline{aAa}$ such that $bp = pb =0$ and
\[
d_\tau(a) = d_\tau(b) + d_\tau(p), \ \forall \tau \in \mathrm{T}(A).
\]
\end{lem}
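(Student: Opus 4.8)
The plan is to produce the desired projection at the ``top'' of the spectrum of $a$ and then peel off an honest orthogonal complement inside $\overline{aAa}$. First I would exploit that $q$ is a projection to upgrade the Cuntz subequivalence $q \precsim a$ to a genuine subprojection. Since $(q-\tfrac12)_+ = \tfrac12 q \sim q$, fact (i) yields a $\delta>0$ with $q \precsim (a-\delta)_+$. Writing $e=(a-\delta)_+$ and applying the standard characterisation of subequivalence (for each $\epsilon$ there are $\delta'$ and $d$ with $(q-\epsilon)_+ = d\,(e-\delta')_+\,d^*$, see \cite{APT}), one obtains an element $z$ with $zz^* = q$ and $z^*z \in \overline{(e-\delta')_+ A (e-\delta')_+}$. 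Because $zz^*=q$ is a projection its spectrum lies in $\{0,1\}$, hence so does that of $z^*z$, so $p := z^*z$ is a projection with $p \sim_{\mathrm{MvN}} q$ and $p \in \overline{(e-\delta')_+ A (e-\delta')_+} \subseteq \overline{aAa}$. Since $p$ is supported where $a$ is bounded below by a positive constant, a routine functional-calculus estimate gives $pap \ge c\,p$ for some $c>0$, whence $pap \sim p$ in the corner $pAp$.

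With $p$ in hand I would set $b = (1-p)a(1-p)$. The point to check is that $b \in \overline{aAa}$: because $p \in \overline{aAa}$, expanding $b = a - pa - ap + pap$ exhibits $b$ as a sum of products of elements of the algebra $\overline{aAa}$, so $b$ lies there as well (the ambient unit does not actually appear). Orthogonality $bp = pb = 0$ is then immediate from $(1-p)p = 0$.

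It remains to verify $d_\tau(a) = d_\tau(b) + d_\tau(p)$ for every $\tau \in \mathrm{T}(A)$, which I would establish as two inequalities. For ``$\geq$'': $p$ and $b$ are orthogonal elements of $\overline{aAa}$, so $p+b \in \overline{aAa}$ and Lemma \ref{herrep} gives $p+b \precsim a$; additivity of $d_\tau$ on orthogonal sums then yields $d_\tau(p)+d_\tau(b) = d_\tau(p+b) \le d_\tau(a)$. For ``$\leq$'': decompose $a = a^{1/2}pa^{1/2} + a^{1/2}(1-p)a^{1/2}$ and apply the standard subadditivity $\langle x+y\rangle \le \langle x\rangle + \langle y\rangle$ of the Cuntz class. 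Since $a^{1/2}pa^{1/2} \sim pap \sim p$ and $a^{1/2}(1-p)a^{1/2} \sim (1-p)a(1-p) = b$, monotonicity of $d_\tau$ gives $d_\tau(a) \le d_\tau(p) + d_\tau(b)$. Combining the two inequalities completes the argument; here exactness guarantees that these lower semicontinuous dimension functions arise from genuine traces, and stable finiteness ensures $\mathrm{T}(A) \neq \emptyset$ so the identity is not vacuous, while $p \sim_{\mathrm{MvN}} q$ gives in particular $p \sim q$.

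The main obstacle is the first step: passing from the mere Cuntz subequivalence $q \precsim a$ to an honest projection $p \sim q$ that simultaneously lies in $\overline{aAa}$ and is dominated by the top of the spectrum of $a$ (so that $pap \sim p$ and $p$ commutes with the relevant spectral cut-off). This is exactly where the projection hypothesis on $q$ and the Rørdam-type realisation of subequivalence do the real work, following the idea of Lemma 2.19 of \cite{APT}; once $p$ is correctly placed, the orthogonal complement $b=(1-p)a(1-p)$ and the two dimension estimates are essentially forced.
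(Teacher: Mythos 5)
Your argument is correct. The first half---upgrading $q \precsim a$ to an honest projection $p \sim q$ sitting inside $\overline{(a-\delta)_+A(a-\delta)_+} \subseteq \overline{aAa}$---is essentially the paper's route, except that you extract $p$ directly as $z^*z$ for a partial isometry $z$ with $zz^*=q$, whereas the paper passes through the positive element $s=(a-\delta)_+^{1/2}v^*v(a-\delta)_+^{1/2}$, notes that $\langle s\rangle$ is compact, and then invokes Lemma \ref{projrep}. (One bookkeeping point: the step producing $\delta$ with $q \precsim (a-\delta)_+$ is not literally fact (i) of Section \ref{prelim} but the standard R\o rdam lemma that $a\precsim b$ implies $(a-\epsilon)_+ \precsim (b-\delta)_+$ for some $\delta>0$; this is also what the paper uses without comment.) Where you genuinely diverge is in the choice of $b$ and the proof of the trace identity. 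The paper replaces $a$ by the cut-off $f_\delta(a)$ with $f_\delta(t)=\min\{t/\delta,1\}$, which is Cuntz equivalent to $a$ and satisfies $f_\delta(a)p=pf_\delta(a)=p$; then $f_\delta(a)=p+(1_A-p)f_\delta(a)(1_A-p)$ is an exactly orthogonal decomposition and $d_\tau(a)=d_\tau(p)+d_\tau(b)$ follows in one line from additivity on orthogonal sums. Your $b=(1-p)a(1-p)$ instead forces you to establish the identity as two inequalities, using subadditivity of $d_\tau$ for ``$\leq$'' together with $a^{1/2}pa^{1/2}\sim pap\sim p$ (which in turn requires the estimate $pap\geq cp$), and Lemma \ref{herrep} plus orthogonal additivity for ``$\geq$''. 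Both routes are valid; the paper's cut-off trick buys an exactly additive decomposition with nothing left to estimate, while yours avoids introducing $f_\delta(a)$ at the cost of the extra comparison $pap\sim p$ and the two-sided argument.
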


\begin{proof}
Let $0< \epsilon <1/2$ be given and find $\delta >0$ such that
\[
(q-\epsilon)_+ \precsim (a-\delta)_+.
\]
It follows that for any $0< \gamma < 1/2$ there is $v \in A \otimes \mathcal{K}$ such that
\[
v(a-\delta)_+v^* = (q - \epsilon -\gamma)_+ \sim q.
\]
Set
\[
s = (a-\delta)_+^{1/2} v^*v (a-\delta)_+^{1/2},
\]
so that $q \sim s$ and $\langle s \rangle$ is compact in $\mathrm{Cu}(A)$ (\cite{APT}, Corollary 2.6).  There is then a projection $p$ in C$^*(s)$ with $s \sim p$ by Lemma \ref{projrep}.  Clearly,
\[
p \in \overline{(a-\delta)_+^{1/2} A (a-\delta)_+^{1/2}}.
\]
Set $f_\delta(t) = \min \{ t/\delta, 1\}$, $t \geq 0$, so that $f_\delta(a)p = pf_\delta(a) =p$.  Note that \mbox{$a \sim f_\delta(a)$} as these elements both have the same support in C$^*(a) = \mathrm{C}(\sigma(a))$.  Now, 
\begin{eqnarray*}
f_\delta(a) & = &  f_\delta(a)(p+(1_A-p)) \\
& = & f_\delta(a)p + f_\delta(a)(1_A-p) \\
& = & p + (1_A-p)f_\delta(a) (1_A-p) \\
& = & p \oplus (f_\delta(a) - p)
\end{eqnarray*}
Setting $b =  f_\delta(a) - p$ and observing that $a \sim f_\delta(a)$, we have
\[
d_\tau(a) = d_\tau(f_\delta(a)) = d_\tau(p \oplus b) = d_\tau(p) + d_\tau(b),
\]
as desired.
\end{proof}

\begin{prop}\label{projsum}
Let $A$ be a unital simple separable exact C$^*$-algebra which is approximately divisible and of real rank zero.  Suppose further that $A$ is stably finite, and let $a \in A $ be positive.  It follows that there is a sequence $p_1,p_2,\ldots$ of mutually orthogonal projections in $\overline{aAa}$ such that with
\[
b = \bigoplus_{i=1}^\infty \frac{1}{2^i} p_i,
\]
we have a homotopy $h:[0,1] \to (\overline{aAa})_+$ satisfying $h(0) = a$, $h(1) = b$, and $\langle h(t) \rangle = \langle a \rangle \in \mathrm{Cu}(A)$ for each $t \in [0,1]$.
\end{prop}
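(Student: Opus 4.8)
The plan is to realize $\langle a \rangle$ as the supremum of an increasing sequence of compact classes, to peel off one projection at a time with Lemma \ref{split}, to assemble these into $b$, and then to deform $a$ to $b$ in countably many stages. First I would fix an increasing approximate unit of projections $(e_n)$ for the hereditary subalgebra $\overline{aAa}$, which exists because real rank zero passes to hereditary subalgebras. Each $\langle e_n \rangle$ is compact, so $\langle e_n \rangle \ll \langle a \rangle$, and one checks $\sup_n \langle e_n \rangle = \langle a \rangle$: since $e_n a e_n \to a$ we have $\|e_n a e_n - a\| < \epsilon$ eventually, whence $(a-\epsilon)_+ \precsim e_n a e_n \precsim e_n$ by property (i) and Lemma \ref{herrep}.

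Next I would peel. Set $b_0 = a$ and $s_{n-1} = p_1 \oplus \cdots \oplus p_{n-1}$. At stage $n$, real rank zero lets me write $\langle b_{n-1} \rangle = \sup_k \langle f_k \rangle$ with $f_k \precsim b_{n-1}$ projections, so that $\langle a \rangle = \langle s_{n-1} \rangle + \langle b_{n-1} \rangle = \sup_k \langle s_{n-1} \oplus f_k \rangle$. Because $\langle e_n \rangle \ll \langle a \rangle$, compact containment yields a $k$ with $\langle e_n \rangle \le \langle s_{n-1} \rangle + \langle f_k \rangle$; I put $q_n = f_k$ and apply Lemma \ref{split} to $b_{n-1}$ and $q_n$, obtaining a projection $p_n \sim q_n$ and a remainder $b_n$ in $\overline{b_{n-1} A b_{n-1}}$ with $f_{\delta_n}(b_{n-1}) = p_n \oplus b_n$. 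Inductively the $p_i$ are mutually orthogonal, lie in $\overline{aAa}$, and satisfy $\langle a \rangle = \langle s_n \rangle + \langle b_n \rangle$ with $\langle s_n \rangle = \langle s_{n-1} \rangle + \langle q_n \rangle \ge \langle e_n \rangle$; hence $\sup_n \langle s_n \rangle = \langle a \rangle$. Setting $b = \bigoplus_i 2^{-i} p_i \in \overline{aAa}$ (the series converges in norm) and observing that $(b-\epsilon)_+ \sim s_n$ for $2^{-(n+1)} < \epsilon < 2^{-n}$, I conclude $\langle b \rangle = \sup_n \langle s_n \rangle = \langle a \rangle$.

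It remains to connect $a$ to $b$. I would build the path in stages, compressing stage $n$ into $[1-2^{-(n-1)}, 1-2^{-n}]$, so that the configuration at the end of stage $n$ is $c_n = \bigoplus_{i \le n} 2^{-i} p_i \oplus r_n$, where $r_n = 2^{-n} b_n$ has $\|r_n\| \le 2^{-n}$ and $\langle r_n \rangle = \langle b_n \rangle$. Within stage $n$ I reshape only the current remainder $r_{n-1}$ by a path of functions $\psi_{n,s}$, applied by functional calculus, running from the identity to $2^{-n} f_{\delta_n}$, each function vanishing exactly at $0$; this pulls out the block $2^{-n} p_n$ and leaves $r_n$, while the finished blocks $2^{-i} p_i$ for $i < n$ stay fixed and orthogonal. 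Each $\psi_{n,s}(r_{n-1})$ has the same support as $r_{n-1}$, so its Cuntz class is the constant $\langle b_{n-1} \rangle$ (Proposition 2.5 of \cite{APT}, as in Lemma \ref{projrep}), and the total class remains $\langle s_{n-1} \rangle + \langle b_{n-1} \rangle = \langle a \rangle$ throughout the stage.

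Finally I would check continuity and the terminal value. The finished blocks agree with those of $b$, while both the reshaped remainder and the tail $\bigoplus_{i \ge n} 2^{-i} p_i$ lie in $\overline{r_{n-1} A r_{n-1}}$ with norm at most $2^{-(n-1)}$, so $\|c(t) - b\| \le 2^{-(n-2)}$ for $t$ in stage $n$, and the path extends continuously to $t=1$ with $c(1) = b$. I expect the main obstacle to be exactly this endpoint: since Cuntz classes are not norm-continuous, continuity of the path does \emph{not} by itself force $\langle c(1) \rangle = \langle a \rangle$, and the deliberate $2^{-n}$ shrinking of the remainders that makes the path converge says nothing about the class. The value at $t=1$ must instead be pinned down by the independent order-theoretic identity $\langle b \rangle = \sup_n \langle s_n \rangle = \langle a \rangle$ established above. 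Reconciling a norm-convergent construction with a Cuntz class controlled only through an increasing supremum is the delicate point of the argument.
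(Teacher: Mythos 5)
Your construction is correct, but it diverges from the paper's proof at two points, most strikingly in the homotopy itself. For the decomposition, the paper fixes the increasing sequence of projections $q_n$ with $\sup_n\langle q_n\rangle=\langle a\rangle$ up front, uses weak unperforation of $\mathrm{K}_0$ to realize the differences $\langle q_n\rangle-\langle q_{n-1}\rangle$ as classes of projections $r_n$, and peels those off with Lemma \ref{split}; it then proves $\langle b\rangle=\langle a\rangle$ by showing $d_\tau(b)=d_\tau(a)$ for all traces and invoking strict comparison together with the non-compactness of both classes. You instead choose the peeled projection adaptively at each stage via compact containment so that $\langle s_n\rangle\geq\langle e_n\rangle$, which lets you conclude $\langle b\rangle=\sup_n\langle s_n\rangle=\langle a\rangle$ by a purely order-theoretic argument, avoiding both weak unperforation and strict comparison; that is a legitimate and arguably cleaner route to the identity $\langle b\rangle=\langle a\rangle$. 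For the path, however, you build an elaborate staged functional-calculus deformation compressed into the intervals $[1-2^{-(n-1)},1-2^{-n}]$, and you correctly identify that its delicate point is the endpoint $t=1$, which must be rescued by the independently established equality $\langle b\rangle=\langle a\rangle$. The paper dissolves all of this with a one-line observation you missed: once $b\in\overline{aAa}$ and $\langle b\rangle=\langle a\rangle$ are known, the straight line segment $h(t)=(1-t)a+tb$ already has constant class, since $h(t)\in\overline{aAa}$ gives $\langle h(t)\rangle\leq\langle a\rangle$ by Lemma \ref{herrep}, while $(1-t)a\leq h(t)$ and $(1-t)a\sim a$ give $\langle h(t)\rangle\geq\langle a\rangle$ for $t\in(0,1)$. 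Your staged path buys nothing that the segment does not, at the cost of considerable bookkeeping; the real content of the proposition lives entirely in the construction of $b$ and the verification that $\langle b\rangle=\langle a\rangle$, where your variant is a genuine and sound alternative. (One small remark: the paper also treats the case of compact $\langle a\rangle$ separately via Lemma \ref{projrep}; in your scheme this case is absorbed because the remainders $b_n$ eventually vanish and the sum defining $b$ becomes finite.)
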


\begin{proof}
If $\langle a \rangle \in \mathrm{Cu}(A)$ is compact, then zero is an isolated point in the spectrum of $a$ and the Proposition follows from Lemma \ref{projrep}.

If $a$ is not compact, then real rank zero implies the existence of a sequence $q_1,q_2,\ldots$ of projections in $A \otimes \mathcal{K}$ such that $q_i \neq q_{i+1}$, $\langle q_i \rangle \ll \langle q_{i+1} \rangle$, and 
\[
\sup_i \langle q_i \rangle = \langle a \rangle
\]
\noindent
Since $A$ is approximately divisible, the $\mathrm{K}_0$-group of $A$ is weakly unperforated.  It follows that $\langle q_{i} \rangle - \langle q_{i-1} \rangle  = \langle r_i \rangle$ for each $i >1$ and some projection $0 \neq r_i \in A \otimes \mathcal{K}$.  Set $r_1 = q_1$.  Apply Lemma \ref{split} with $q = r_1$ and $a$ as in the present Proposition to find a projection $p_1 \sim r_1$ and positive element $b_1$ in $\overline{aAa}$ such that $p_1 b_1 = 0$ and 
\[
d_\tau(a) = d_\tau(p_1) + d_\tau(b_1), \ \forall \tau \in \mathrm{T}(A).
\]
Assume, inductively, that we have found in $\overline{aAa}$ mutually orthogonal projections $p_i \sim r_i$, $1 \leq i \leq n$,  and a positive element $b_n$ orthogonal to each $p_i$ such that
\begin{equation}\label{induct}
d_\tau(a) = d_\tau \left( \oplus_{i=1}^n p_i \right) + d_\tau(b_n) = \sum_{i=1}^n  d_\tau(p_i) + d_\tau(b_n).
\end{equation}
Applying Lemma \ref{split} with $a=b_n$ and $q=r_{n+1}$ and labeling the resulting projection and positive element as $p_{n+1}$ and $b_{n+1}$, respectively, yields (\ref{induct}) above with $n$ replaced by $n+1$.  Continuing inductively in this manner produces a sequence of mutually orthogonal projections $p_1,p_2,\ldots$ in $\overline{aAa}$.

Set 
\[
b = \bigoplus_{i=1}^\infty \frac{1}{2^i} p_i.
\]
Note that zero is an accumulation point of the spectrum of $b$, so that $b$ is not compact in $\mathrm{Cu}(A)$.  Now
\[
q_n \sim \oplus_{i=1}^n p_i \precsim b \precsim a
\]
by Lemma \ref{herrep} so that 
\[
d_\tau(q_n) \leq d_\tau(b) \leq d_\tau(a), \ \forall \tau \in \mathrm{T}(A).
\]
Since $\langle a \rangle = \sup_n \langle q_n \rangle$ in $\mathrm{Cu}(A)$ we then have
\[
d_\tau(a) = \sup_n d_\tau(q_n) \leq d_\tau(b) \leq d_\tau(a), \ \forall \tau \in \mathrm{T}(A)
\]
and so 
\[ 
d_\tau(b) = d_\tau(a), \  \forall \tau \in \mathrm{T}(A).
\]
$A$ is approximately divisible, hence $\mathcal{Z}$-stable, and so has strict comparison of positive elements (\cite{Ro1}, \cite{TW2}).  Since neither $a$ nor $b$ is compact it follows that $\langle a \rangle= \langle b \rangle$ in $\mathrm{Cu}(A)$ (\cite{BPT}). 

Set 
\[
h(t) = (1-t)a +tb, \ t \in [0,1].
\]
Evidently, $\langle h(0) \rangle = \langle h(1) \rangle =  \langle a \rangle$.   Since $h(t) \in \overline{aAa}$ we have $\langle h(t) \rangle \leq \langle a \rangle$.  On the other hand, for $t \in (0,1)$,
\[
a \sim (1-t)a \precsim (1-t)a + tb = h(t),
\]
whence $ \langle h(t) \rangle \geq \langle a \rangle$ also.  
\end{proof}

\section{Path connected Cuntz classes}\label{main}

The following Lemma is due to R\o rdam (\cite{Ro1}, Lemma 6.3), although we have expanded its statement slightly.  Recall that a unital C$^*$-algebra $A$ is strongly $\mathrm{K}_1$-surjective if the canonical map 
\[
\mathcal{U}(B + \mathbb{C}(1_A)) \to \mathrm{K}_1(A)
\]
is surjective for every full hereditary subalgebra $B$ of $A$.

\begin{lem}\label{K1rep}
Let $A$ be a unital approximately divisible C$^*$-algebra.  Then $A$ is strongly $\mathrm{K}_1$-surjective.  In particular, for any full projection $p \in A$ and $g \in \mathrm{K}_1(A)$, there is a unitary $v \in pAp$ such that $g = [v+ (1_A-p)]_1$ in $\mathrm{K}_1(A)$.
\end{lem}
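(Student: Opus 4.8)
The plan is to derive the strong $\mathrm{K}_1$-surjectivity of $A$ from the fact that approximate divisibility lets one compress a unitary out of a matrix amplification and into a prescribed full hereditary subalgebra. First I would reduce the stated surjectivity to an ordinary $\mathrm{K}_1$-surjectivity statement for the unital algebra $B^\sim := B + \mathbb{C}(1_A)$. Since $A$ is unital and $B$ is a full hereditary subalgebra, Brown's theorem makes $B$ Morita equivalent to $A$, and the inclusion $B^\sim \hookrightarrow A$ induces an isomorphism $\mathrm{K}_1(B^\sim) \cong \mathrm{K}_1(B) \cong \mathrm{K}_1(A)$ (the first because $\mathrm{K}_1$ of a unitization agrees with $\mathrm{K}_1$ of the ideal, the second by fullness). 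Under this identification the canonical map $\mathcal{U}(B^\sim) \to \mathrm{K}_1(A)$ is precisely $\mathcal{U}(B^\sim) \to \mathrm{K}_1(B^\sim)$, so it suffices to show that every class of $\mathrm{K}_1(B^\sim)$ is represented by a single unitary in $B^\sim$.

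Fix $g \in \mathrm{K}_1(A)$ and represent it by a unitary $u = (u_{ij}) \in \mathcal{U}(\mathrm{M}_k(A))$ for some $k$. I would then use approximate divisibility to produce, for any prescribed tolerance, a system of matrix units $\{f_{ij}\}_{i,j \le k}$ in $A$ that approximately commutes with the finitely many entries $u_{ij}$: taking $\phi_n \colon \mathrm{M}_N(\mathbb{C}) \oplus \mathrm{M}_{N+1}(\mathbb{C}) \to A$ with $N \ge k$ and $n$ large, the top-left $k \times k$ block of the first summand is a (non-unital) approximately central copy of $\mathrm{M}_k$. Writing $s_i = f_{i1}$ and $e = \sum_{i \le k} f_{ii}$, the element $w = \sum_{i,j} s_i u_{ij} s_j^*$ is approximately a unitary of the corner $eAe$, and $v := w + (1_A - e)$ is, after a norm-small correction by functional calculus, an honest unitary of $A$. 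A rotation argument—exploiting that the flip on $\mathrm{M}_k \otimes \mathrm{M}_k$ lies in the connected component of the identity—shows that compression through approximately central matrix units realizes the canonical isomorphism $\mathrm{K}_1(\mathrm{M}_k(A)) \cong \mathrm{K}_1(A)$, so that $[v]_1 = [u]_1 = g$. Note that since padding with $1_A - e$ is allowed, I never need a \emph{unital} copy of $\mathrm{M}_k$, so the coprimality of $N$ and $N+1$ poses no obstruction here.

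It remains to guarantee that $v$ lands in $B^\sim$, and this is exactly where fullness of $B$ is essential: I must arrange the $f_{ij}$ to be supported in $B$, so that $w \in eAe \subseteq B$ and hence $v = w + (1_A - e) \in B + \mathbb{C}(1_A) = B^\sim$. When $A$ has real rank zero (the intended application) this is clean, since $B$ inherits real rank zero and, $A$ being simple, contains a full projection $p$; one then works inside the approximately divisible corner $pAp$ and extracts the approximately central matrix units there. For a general full hereditary $B$ one transports the approximately central matrix structure into $B$ by compressing with an approximately central positive element of $B$. I expect this transport—placing approximately central matrix units inside a prescribed full hereditary subalgebra while preserving approximate commutation with the entries of $u$—to be the main obstacle, and indeed it is precisely the point at which the statement is strengthened beyond R\o rdam's original; the compression identity and the passage from approximate to exact unitaries are then routine.

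Finally, the displayed ``in particular'' follows by specialisation. For a full projection $p$ the algebra $B = pAp$ is a full hereditary subalgebra, every unitary of $B^\sim = pAp + \mathbb{C}(1_A)$ has the form $v + \zeta(1_A - p)$ with $v \in \mathcal{U}(pAp)$ and $\zeta \in \mathbb{T}$, and the scalar $\zeta$ does not affect the $\mathrm{K}_1$-class; surjectivity of $\mathcal{U}(B^\sim) \to \mathrm{K}_1(A)$ therefore yields a unitary $v \in pAp$ with $[v + (1_A - p)]_1 = g$.
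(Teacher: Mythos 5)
First, a point of comparison: the paper does not prove this lemma at all --- it is imported from R\o rdam (\cite{Ro1}, Lemma 6.3), the ``slight expansion'' being only that approximately divisible algebras are $\mathcal{Z}$-stable plus the specialisation to $B = pAp$. You are therefore supplying an argument from scratch, and its central step is not correct. The claim that compressing $u \in \mathcal{U}(\mathrm{M}_k(A))$ through a $k \times k$ system of approximately central matrix units $\{f_{ij}\}$ preserves the $\mathrm{K}_1$-class is false. Write $e = \sum_i f_{ii}$, $\bar{f} = \mathrm{diag}(f_{11},\dots,f_{11})$ and $\bar{e} = \mathrm{diag}(e,\dots,e)$ in $\mathrm{M}_k(A)$. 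Conjugating by the partial isometry $S = \sum_i e_{1i} \otimes f_{i1}$ (which satisfies $S^*S = \bar{f}$, $SS^* = e_{11}\otimes e$ and $SuS^* = e_{11}\otimes w$) shows that $[v]_1 = [\,\bar{f} u \bar{f} + (1-\bar{f})\,]_1$, i.e.\ your $v$ represents the class of the \emph{compression} of $u$ by $\bar{f}$. Since $u$ approximately commutes with each $\mathrm{diag}(f_{ii},\dots,f_{ii})$ and these $k$ projections are mutually equivalent via partial isometries that also approximately commute with $u$, one obtains
\[
[u]_1 \;=\; k\,[v]_1 \;+\; \bigl[(1-\bar{e})\,u\,(1-\bar{e}) + \bar{e}\bigr]_1,
\]
not $[v]_1 = [u]_1$. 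The toy case $A = C(\mathbb{T})\otimes \mathrm{M}_k$ with exactly central matrix units makes the factor of $k$ visible: a unitary whose entries commute with the $f_{ij}$ lies in $\mathrm{M}_k(C(\mathbb{T})\otimes 1)$ and its compression has winding number $[u]_1/k$. Your remark that a unital copy of $\mathrm{M}_k$ is never needed is precisely where things go astray: with a non-unital copy you moreover discard the uncontrolled summand over $1-\bar{e}$ in the display above.

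The construction can be repaired, but it becomes a genuinely different proof. One should take a \emph{unital} approximately central copy of $\mathrm{M}_N\oplus\mathrm{M}_{N+1}$ with $N$ at least the matrix size of $u$, split $u \approx u_P \oplus u_Q$ along $1_A = P + Q$, and in each block run the flip argument you allude to in full: conjugating by a path implementing the flip on $\mathrm{M}_N\otimes\mathrm{M}_N$ carries $u_P$ to something of the form $1_{\mathrm{M}_N}\otimes v_P$ with $v_P\in\mathcal{U}(PAP)$, whence $[u_P]_1 = N[v_P]_1 = [v_P^N]_1$; the representative in $\mathcal{U}(A)$ is the \emph{power} $v_P^N$ (and $v_Q^{N+1}$ for the other block), not the compression itself. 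Separately, your treatment of a general full hereditary subalgebra $B$ is only a gesture --- compressing matrix units by a positive element of $B$ does not yield matrix units in $B$ --- although for the application in the paper only the corner case $B = pAp$ is used, where $pAp$ is unital and approximately divisible by \cite{BKR} and your reduction via $\mathrm{K}_1(pAp)\cong\mathrm{K}_1(A)$ is sound.
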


\begin{lem}\label{projhom}
Let $A$ be a unital simple approximately divisible C$^*$-algebra and let $p, q$ be full projections in $A$ with $[p] =[q]$ in $\mathrm{K}_0(A)$.  It follows that there is a continuous path of unitaries $(u_t)_{t \in [0,1]}$ in $A$ such that $u_0 = 1_A$ and $u_1 p u_1^* = q$.  In particular, $p$ and $q$ are homotopic via projections in $A$.
\end{lem}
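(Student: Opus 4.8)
The plan is to reduce the problem to producing a single unitary $u \in A$ with $upu^* = q$ and $[u]_1 = 0$ in $\mathrm{K}_1(A)$. Once such a $u$ is in hand, $\mathrm{K}_1$-injectivity of $A$ (which holds because $A$ is approximately divisible, hence $\mathcal{Z}$-stable) forces $u$ into the connected component $\mathcal{U}_0(A)$ of the identity in the unitary group. Any continuous path of unitaries $(u_t)_{t \in [0,1]}$ with $u_0 = 1_A$ and $u_1 = u$ then gives the homotopy of projections $t \mapsto u_t p u_t^*$, running from $p$ to $upu^* = q$, which proves the final assertion.

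The first step is to obtain a unitary implementing the equivalence at all, ignoring its $\mathrm{K}_1$-class. Since $A$ is simple, the nonzero projections among $p$, $q$, $1_A - p$, $1_A - q$ are all full. From $[p] = [q]$ in $\mathrm{K}_0(A)$ and cancellation of projections in $A$ one gets $p \sim q$, and from $[1_A - p] = [1_A] - [p] = [1_A] - [q] = [1_A - q]$ one gets $1_A - p \sim 1_A - q$. Adding partial isometries realizing these two Murray--von Neumann equivalences produces a unitary $w \in A$ with $wpw^* = q$. The degenerate cases in which $p$ or $q$ equals $0$ or $1_A$ are immediate.

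The second and crucial step is to adjust the $\mathrm{K}_1$-class of $w$ without disturbing the relation $wpw^* = q$. For any unitary $v$ of the corner $pAp$, the element $v + (1_A - p)$ is a unitary of $A$ that fixes $p$ under conjugation, namely $(v + (1_A - p))\, p\, (v + (1_A - p))^* = p$; hence $u := w(v + (1_A - p))$ is again a unitary, it still satisfies $upu^* = wpw^* = q$, and $[u]_1 = [w]_1 + [v + (1_A - p)]_1$ in $\mathrm{K}_1(A)$. Because $p$ is a full projection, Lemma \ref{K1rep} allows me to realize the prescribed class $-[w]_1 \in \mathrm{K}_1(A)$ as $[v + (1_A - p)]_1$ for a suitable unitary $v \in pAp$. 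With this choice $[u]_1 = 0$, and the reduction of the first paragraph then finishes the proof.

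The main obstacle is the interplay within the correction step: I must carry out the $\mathrm{K}_1$-adjustment entirely inside the corner $pAp$ so that the conjugation carrying $p$ onto $q$ is left intact, and this is exactly what strong $\mathrm{K}_1$-surjectivity of the full hereditary subalgebra $pAp$ (Lemma \ref{K1rep}) supplies. Secondary points to verify with care are the cancellation of projections used to construct $w$ and the $\mathrm{K}_1$-injectivity invoked at the very end, both of which I expect to deduce from the $\mathcal{Z}$-stability of $A$.
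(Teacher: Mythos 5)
Your proof is correct and follows essentially the same route as the paper: construct a unitary conjugating $p$ to $q$ via cancellation, then correct its $\mathrm{K}_1$-class using Lemma \ref{K1rep} applied to a corner whose unitaries fix $p$ under conjugation, and finish with $\mathrm{K}_1$-injectivity. The only (harmless) difference is that you perform the correction in $pAp$ while the paper uses $(1_A-p)A(1_A-p)+p$, which is why the paper must separately dispose of the case $p=1_A$.
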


\begin{proof}
Since $[p] = [q]$ in $\mathrm{K}_0(A)$, we have also that $[1_A - p] = [1_A - q]$.  The approximate divisibility of $A$ implies that $A$ has stable rank one, hence enjoys cancellation of projections.  Thus, $p \sim q$ and $1_A-p \sim 1_A-q$ in the Murray-von Neumann semigroup.  It follows that there is a unitary $v \in \mathcal{U}(A)$ such that $vpv^* = q$.  We now modify $v$ to ensure that it is in the connected component of $1_A \in \mathcal{U}(A)$.  

If $p=1_A$ there is nothing to prove, so we may assume that $1_A-p$ is full.  Set $g = [v]_1$ in $\mathrm{K}_1(A)$.  Apply Lemma \ref{K1rep} with $1_A-p$ in place of $p$ to find a unitary $w \in \mathcal{U}((1_A-p)A(1_A-p) + p)$ such that $[w]_1 = -g$ in $\mathrm{K}_1(A)$.  Now $[vw]_1 = 0$ in $\mathrm{K}_1(A)$ and $A$ has stable rank one, so there is a continuous path $(u_t)_{t \in [0,1]}$ in $\mathcal{U}(A)$ such that $u_0=1_A$ and $u_1 = vw$.  Observing that
\[ 
u_1 p u_1^* = (vw) p (vw)^* = vwpw^*v^* = vpv^* = q
\]
completes the proof.
\end{proof}

We are now ready to prove our main result.

\begin{proof} 
(Theorem \ref{cuntzhom})
Let $A$ be a simple unital separable exact C$^*$-algebra which is approximately divisible and of real rank zero, and let $a,b \in A$ be positive with $\langle a \rangle=\langle b \rangle$ in $\mathrm{Cu}(A)$.  
Since $A$ is $\mathcal{Z}$-stable by Theorem 2.3 of \cite{TW2}, it is either stably finite (when it has a trace) or purely infinite (when it is traceless) (\cite{Ro1}).  In the purely infinite case, the Cuntz classes of any two nonzero positive elements are the same, so the line segment $(1-t)a+tb, \ t \in [0,1]$, defines a homotopy between $a$ and $b$. This segment consists entirely of nonzero positive elements and the Theorem follows.  We may thus assume that $A$ is stably finite, whence $\mathrm{T}(A) \neq \emptyset$.  In fact, $A$ has stable rank one by Theorem 6.7 of \cite{Ro1}.

If $\langle a \rangle=\langle b \rangle$ is compact in $\mathrm{Cu}(A)$ then zero is not an accumulation point of the spectrum of $a$ or $b$ by Proposition 2.23 of \cite{APT}.  By Lemma \ref{projrep} we may assume that $a$ and $b$ are projections and the desired result follows from Proposition \ref{projhom}.  

It remains to address the case where $\langle a \rangle= \langle b \rangle$ is not compact, so that $a$ and $b$ have zero as an accumulation point of their spectrum.  By Proposition \ref{projsum}, we may assume that there are sequences of projections $p_1,p_2,\ldots$ and $q_1,q_2,\ldots$ in $\overline{aAa}$ and $\overline{bAb}$, respectively, with the following properties:
\begin{enumerate}
\item[(i)] $p_ip_j = 0 = q_iq_j$ for every pair $i \neq j$;
\item[(ii)] $p_i \sim q_i$ for each $i \in \mathbb{N}$;
\item[(iii)] 
\[
a = \bigoplus_{i=1}^\infty \frac{1}{2^i} p_i \ \mathrm{and} \ b = \bigoplus_{i=1}^\infty \frac{1}{2^i} q_i.
\]
\end{enumerate}
For convenience we set $s_k = \oplus_{i=1}^k 1/2^i$.  Assume that we have found a continuous path $u_t:[0,s_k] \to \mathcal{U}(A)$ with the property that $u_0 = 1_A$ and
\begin{equation}\label{conjequal}
u_{s_k} \left( \oplus_{i=1}^k p_i \right) u_{s_k}^* = \oplus_{i=1}^k q_i.
\end{equation}
(The existence of this path for $k=1$ follows from (ii) above and Lemma \ref{projhom}.)  Let us show that $u_t$ can be extended to $[0,s_{k+1}]$ so that (\ref{conjequal}) holds with $k+1$ in place of $k$ and moreover that
\begin{equation}\label{fix}
u_t \left( \oplus_{i=1}^k p_i \right) u_t^* = \oplus_{i=1}^k q_i, \ \forall t \in [s_k,s_{k+1}].
\end{equation}

Set 
\[
Q_k = 1_A - \left( \oplus_{i=1}^k q_i \right).
\]
Note that $Q_k A Q_k$ is approximately divisible by \cite{BKR}[Corollary 2.9], and that $u_{s_k} p_{k+1} u_{s_k}^* \sim q_{k+1}$ in $Q_k A Q_k$.  It follows from Lemma \ref{projhom} that there is a continuous path 
\[
\tilde{v}_t:[s_k,s_{k+1}] \to \mathcal{U}(Q_k A Q_k)
\]
such that $\tilde{v}_{s_k} = Q_k$ and 
\[
\tilde{v}_{s_{k+1}} (u_{s_k} p_{k+1} u_{s_k}^*) \tilde{v}_{s_{k+1}}^* = q_{k+1}.
\]
Set $v_t = \tilde{v}_t + (1_A-Q_k), \ t \in [s_k,s_{k+1}]$, so that $v_t \in \mathcal{U}(A)$ and $v_{s_k} = 1_A$.  We now define our extension of $u_t:[0,s_{k+1}] \to \mathcal{U}(A)$ via
\begin{equation}\label{extend}
u_t = \left\{ \begin{array}{rcl} u_t & = & t \in [0,s_k) \\ v_t u_{s_k} & = & t \in [s_k,s_{k+1}] \end{array} \right. .
\end{equation}
This extension is continuous since $v_{s_k} = 1_A$.  Also,
\begin{align}
u_{s_{k+1}} \left( \oplus_{i=1}^{k+1} p_i \right) u_{s_{k+1}}^* & =  v_{s_{k+1}} \left( u_{s_{k}} \left( \oplus_{i=1}^{k+1} p_i \right) u_{s_{k}}^* \right) v_{s_{k+1}}^* \\
& =  v_{s_{k+1}} \left( u_{s_{k}} \left( \oplus_{i=1}^{k} p_i \right) u_{s_{k}}^* \right) v_{s_{k+1}}^* \\
&    \hspace{10mm} +  \ v_{s_{k+1}} u_{s_k} p_{i+1} u_{s_{k}}^*  v_{s_{k+1}}^*  \\
& =  v_{s_{k+1}} \left( \oplus_{i=1}^k q_i \right) v_{s_{k+1}}^* \\
& \hspace{10mm} + \ \ \tilde{v}_{s_{k+1}} u_{s_k} p_{i+1} u_{s_{k}}^*  \tilde{v}_{s_{k+1}}^* \\
&= \left( \oplus_{i=1}^k q_i \right) + q_{k+1} \\
&= \oplus_{i=1}^{k+1} q_i
\end{align}
satisfying (\ref{conjequal}) with $k+1$ in place of $k$, as required.  Note also that for $t \in [s_k,s_{k+1}]$ we have
\begin{align}
u_t \left( \oplus_{i=1}^k p_i \right) u_t^* & =  v_t \left( u_{s_{k}} \left( \oplus_{i=1}^k p_i \right) u_{s_{k}}^* \right) v_t^* \\
& =  v_t  \left( \oplus_{i=1}^{k} q_i \right)  v_t^* \\
&= \oplus_{i=1}^k q_i
\end{align}
as required by (\ref{fix}).  This completes our extension of $u_t$ to $[0,s_{k+1}]$.  Beginning at $t=0$ and continuing these extensions inductively yields a continuous path $u_t:[0,1) \to \mathcal{U}(A)$ such that
\begin{equation}\label{conjequal2}
u_t \left( \oplus_{i=1}^l p_i \right) u_t^* = \oplus_{i=1}^l q_i, \ \forall t \geq s_k, \ \forall l \leq k.
\end{equation}

Define $h:[0,1] \to A$ as follows: 
\begin{equation}\label{hdef}
h(t) = \left\{ \begin{array}{rl} u_t a u_t^*, &  t \in [0,1) \\ b, &t = 1 \end{array} \right. 
\end{equation}
Clearly, $\langle h(t) \rangle= \langle a \rangle$ in $\mathrm{Cu}(A)$ for every $t$ in $[0,1]$, and $h(t)$ is continuous at every $t$ in $[0,1)$ since $u_t$ is.  It remains to prove that $h(t)$ is continuous at $t=1$.  Recall that $s_k \to 1$ as $k \to \infty$.  For $t \geq s_k$ we have 
\begin{align}
\| h(1)-h(t) \| &= \| b - u_t a u_t* \| \\
&=\left\| \bigoplus_{i=1}^\infty \frac{1}{2^i} q_i - u_t \left( \bigoplus_{i=1}^\infty \frac{1}{2^i} p_i \right) u_t^* \right\| \\
& \stackrel{(\ref{conjequal2})}{=} \left\| \bigoplus_{i=k+1}^\infty \frac{1}{2^i} q_i - u_t \left( \bigoplus_{i=k+1}^\infty \frac{1}{2^i} p_i \right) u_t^* \right\| \\
& \leq \left\| \bigoplus_{i=k+1}^\infty  \frac{1}{2^i} q_i \right\| + \left\| \bigoplus_{i=k+1}^\infty \frac{1}{2^i} p_i  \right\| \\
&= \frac{1}{2^{k+1}} + \frac{1}{2^{k+1}} \\
&= \frac{1}{2^k}
\end{align}
This shows that $h(t)$ is continuous at $t=1$, completing the proof.
\end{proof}


\begin{thebibliography}{100}


\bibitem{APRT} Antoine, Ramon; Perera, Francesc; Robert, Leonel; Thiel, Hannes, \mbox{ C$^*$-algebras} of stable rank one and their Cuntz semigroups. {\it Duke Math. J.} 171 (2022), no. 1, 33–99. 


\bibitem{APT}  Ara, Pere; Perera, Francesc; Toms, Andrew S. K-theory for operator algebras. Classification of C$^*$-algebras. {\it Aspects of operator algebras and applications}, 1–71, Contemp. {\it Math., 534, Amer. Math. Soc., Providence, RI}, 2011.

\bibitem{BKR} Blackadar, Bruce; Kumjian, Alexander; Rørdam, Mikael, Approximately central matrix units and the structure of noncommutative tori. {\it K-Theory} 6 (1992), no. 3, 267–284.

\bibitem{BPT}  Brown, Nathanial P.; Perera, Francesc; Toms, Andrew S., The Cuntz semigroup, the Elliott conjecture, and dimension functions on \mbox{C$^*$-algebras.} {\it J. Reine Angew. Math.} 621 (2008), 191–211. 

\bibitem{CETWW} Castillejos, Jorge; Evington, Samuel; Tikuisis, Aaron; White, Stuart; Winter, Wilhelm, Nuclear dimension of simple C$^*$-algebras. {\it Invent. Math.} 224 (2021), no. 1, 245–290.

\bibitem{CEI} Coward, Kristofer T.; Elliott, George A.; Ivanescu, Cristian, The Cuntz semigroup as an invariant for C$^*$-algebras. {\it J. Reine Angew. Math.} 623 (2008), 161–193. 

\bibitem{Cuntz1} Cuntz, Joachim, Dimension functions on simple C$^*$-algebras. {\it Math. Ann.} 233 (1978), no. 2, 145–153. 

\bibitem{ET}  Elliott, George A.; Toms, Andrew S., Regularity properties in the classification program for separable amenable C$^*$-algebras. {\it Bull. Amer. Math. Soc. (N.S.)} 45 (2008), no. 2, 229–245.

\bibitem{KR} Kirchberg, Eberhard; R\o rdam, Mikael: Non-simple purely infinite C$^*$-algebras. {\it Amer. J. Math.} 122 (2000), no. 3, 637–666.

\bibitem{Ro1}  Rørdam, Mikael, The stable and the real rank of $\mathcal{Z}$-absorbing \mbox{ C$^*$-algebras.} {\it Internat. J. Math.} 15 (2004), no. 10, 1065–1084.

\bibitem{To1} Toms, Andrew S., On the classification problem for nuclear \mbox{ C$^*$-algebras.} {\it Ann. of Math. (2)} 167 (2008), no. 3, 1029–1044.


\bibitem{TW2}  Toms, Andrew S.; Winter, Wilhelm $\mathcal{Z}$-stable ASH algebras. {\it Canad. J. Math.} 60 (2008), no. 3, 703–720.

\bibitem{W1}  Winter, Wilhelm, Nuclear dimension and $\mathcal{Z}$-stability of pure \mbox{C$^*$-algebras.} {\it Invent. Math.} 187 (2012), no. 2, 259–342.

\end{thebibliography}
\end{document}